\newtheorem{theorem}{Theorem}
\newtheorem{example}[theorem]{Example}
\newenvironment{proof}[1][Proof]{\noindent\textbf{#1.} }{\ \rule{0.5em}{0.5em}}
\begin{document}

\title{Weingarten map of the hypersurface in 4-dimensional Euclidean space and its applications}
\date{\vspace{-5ex}}

\author{Salim Y\"{U}CE\\\\
Yildiz Technical University,\\
Faculty of Arts and Sciences,\,\
Department of Mathematics,\\
Davutpa{\c{s}}a Campus, 34220,
Esenler, Istanbul,  TURKEY\\
E-mail: sayuce@yildiz.edu.tr
}
\maketitle

\renewcommand{\thefootnote}{}
\footnote{2010 \emph{Mathematics Subject Classification}: 53A05, 53A07, 14Q10}

\footnote{\emph{Keywords}:Weingarten map, shape operator, vector product, ternary product, Dupin indicatrix}

\renewcommand{\thefootnote}{\arabic{footnote}}
\setcounter{footnote}{0}

\begin{abstract}
In this paper, by taking into account the beginning of the hypersurface theory in Euclidean space $E^4$, a practical method for the matrix of the Weingarten map (or the shape operator) of an oriented hypersurface $M^3$ in $E^4$ is obtained. By taking this efficient method, it is possible to study of the hypersurface theory in $E^4$ which is analog the surface theory in $E^3$. Furthermore, the Gaussian curvature, mean curvature, fundamental forms and Dupin indicatrix of $M^3$ is introduced.
\end{abstract}
\section{Introduction} 

\addcontentsline{toc}{section}{ }

Let $ x = \sum\limits_{i = 1}^4 {{x_i}{e_i}} ,\,\,y = \sum\limits_{i = 1}^4 {{y_i}{e_i}} ,\,\,z = \sum\limits_{i = 1}^4 {{z_i}{e_i}} $ be three vectors in $\mathbb{R}^4$, equipped with the standard inner product given by

\[\left\langle {x,y} \right\rangle  = {x_1}{y_1} + {x_2}{y_2} + {x_3}{y_3} + {x_4}{y_4},\]
where $ \{ {e_1}, {e_2}, {e_3}, {e_4} \}  $ is the standard basis of $\mathbb{R}^4.$
The norm of a vector $ x\in \mathbb{R}^4 $ is given by $ \left\| x \right\| = \sqrt {\left\langle {x,x} \right\rangle }.$ The vector product (or the ternary product or cross product) of the vectors $x,y,z \in \mathbb{R}^4$ is defined by
\begin{equation} \label{1}
	x \otimes y \otimes z = \left| {\begin{array}{*{20}{c}}
			{{e_1}}&{{e_2}}&{{e_3}}&{{e_4}}\\
			{{x_1}}&{{x_2}}&{{x_3}}&{{x_4}}\\
			{{y_1}}&{{y_2}}&{{y_3}}&{{y_4}}\\
			{{z_1}}&{{z_2}}&{{z_3}}&{{z_4}}
	\end{array}} \right|.
\end{equation}
Some properties of the vector product are given as follows: (for the vector product in  $\mathbb{R}^4 $, see \cite{nam1, nam3, nam5}

	\begin{itemize}
	\item[i.]
	$\left\{ {\begin{array}{*{20}{l}}
		{{e_1} \otimes {e_2} \otimes {e_3} =  - {e_4}}\\
		{{e_2} \otimes {e_3} \otimes {e_4} = {\mkern 1mu} {\mkern 1mu} {\mkern 1mu} {\mkern 1mu} {\mkern 1mu} {e_1}}\\
		{{e_3} \otimes {e_4} \otimes {e_1} =  - {e_2}}\\
		{{e_4} \otimes {e_1} \otimes {e_2} = {\mkern 1mu} {\mkern 1mu} {\mkern 1mu} {\mkern 1mu} {\mkern 1mu} {e_3}}\\
		{{e_3} \otimes {e_2} \otimes {e_1} = {\mkern 1mu} {\mkern 1mu} {\mkern 1mu} {\mkern 1mu} {\mkern 1mu} {e_4}}
		\end{array}} \right.$\\
		
	\item[ii.] \begin{equation} \label{2}
	{\left\| {x \otimes y \otimes z} \right\|^2} = \left| {\begin{array}{*{20}{c}}
			{\left\langle {x,x} \right\rangle }&{\left\langle {x,y} \right\rangle }&{\left\langle {x,z} \right\rangle }\\
			{\left\langle {y,x} \right\rangle }&{\left\langle {y,y} \right\rangle }&{\left\langle {y,z} \right\rangle }\\
			{\left\langle {z,x} \right\rangle }&{\left\langle {z,y} \right\rangle }&{\left\langle {z,z} \right\rangle }
			\end{array}} \right|\end{equation}
		
	\item[iii.] $\left\langle {x \otimes y \otimes z,t} \right\rangle  = \det \left( {x,y,z,t} \right)$.
\end{itemize}

Let \, $ \,M^3\,$  \, be an oriented\, 3-\,dimensional hypersurface in\,\, 4-dimensional Euclidean space $E^4$. Let examine the implicit and parametric equations of $ M^3$. Firstly; the implicit equation of  $ M^3$ can be defined by\\
\begin{equation} \label{3}
	M^3=\left \{X \in E^4 | f:U\subset E^4\overset{diff.}{\rightarrow} \mathbb{R},\, f(X)=const. \,\,\, {\overrightarrow {\nabla f} \left| {_P}\neq 0, \right. P\in M^3}  \right \}
\end{equation}
where $ {\overrightarrow {\nabla f} \left| {_P} \right.}$ is the gradient vector of $M^3.$ The unit normal vector field of $M^3$ is defined by $N = \frac{{\overrightarrow {\nabla f} }}{{\left\| {\overrightarrow {\nabla f} } \right\|}}$.

The Weingarten map (or the shape operator) of  $ M^3$ is defined by
\begin{equation*}
S:\chi \left( M^3 \right) \to \chi \left( M^3 \right),\,\,S\left( X \right) = {D_X}N,
\end{equation*}
where $D$ is the connection of $ E^4$ and $\chi \left( M^3 \right)$ is the space of vector fields of $M^3$. Then the Gauss curvature $K$ and mean curvature $ H $ of $M^3$ are given by  $K=detS$ and $ H=\frac{1}{3} Tr S $, respectively. Also, the $q-th$ fundamental forms of $ M^3$ are given by \cite{nam6},
\[{I^q}\left( {X,Y} \right) = \left\langle {{S^{q - 1}}\left( X \right),Y} \right\rangle ,\,\,{\rm{\forall }}\,\,X,Y \in \chi \left( {{M^3}} \right).\]

Secondly, to examine parametric form of the hypersurface $M^3$ given by the implicit equation in the eq \eqref{3}, let consider
\begin{equation*}
\begin{array}{l}
\phi :U \subset {\mathbb{R}^3} \to {E^4}\\
\,\,\,\,\,\,\,\,\left( {u,v,w} \right) \to \phi \left( {u,v,w} \right)=\left( {{\varphi _1}\left( {u,v,w} \right),{\varphi _2}\left( {u,v,w} \right),{\varphi _3}\left( {u,v,w} \right),{\varphi _4}\left( {u,v,w} \right)} \right)
\end{array}
\end{equation*}
where $ \left( {u,v,w} \right) \in R \subset {\mathbb{R}^3} $ and ${\varphi _i}, 1\le i \le 4 $ are the real functions defined on $R$.\\
${M^3} = \phi \left( R \right) \subset {E^4}$ is a hypersurface if only if the frame field $\left\{ {{\phi _u},{\phi _v},{\phi _w}} \right\}$ of $M^3$ is linearly independent system. It can be also seen by taking the Jacobian matrix ${\left[ \phi  \right]_*} = \left[ {\begin{array}{*{20}{c}}
	{{\phi _u}}&{{\phi _v}}&{{\phi _w}}
	\end{array}} \right]$ of the differential map of $\phi$. It is clear that if rank ${\left[ \phi  \right]_*} =3$, then the vector system $\left\{ {{\phi _u},{\phi _v},{\phi _w}} \right\}$ is linearly independent. Furthermore, $ {{\phi _u},{\phi _v},{\phi _w}} $ are the tangent vectors of the parameter curves $\alpha \left( u \right) = \phi \left( {u,{v_0},{w_0}} \right) $, $  \beta \left( v \right) = \phi \left( {{u_0},v,{w_0}} \right) $ and $ \gamma \left( w \right) = \phi \left( {{u_0},{v_0},w} \right) $, respectively. Then the unit normal vector field of $M^3$ is defined by
\begin{equation} \label{4}
	N = \frac{{{\phi _u} \otimes {\phi _v} \otimes {\phi _w}}}{{\left\| {{\phi _u} \otimes {\phi _v} \otimes {\phi _w}} \right\|}}
\end{equation}
and it has the following properties:
\begin{equation} \label{5}
	\left\langle {N,{\phi _u}} \right\rangle  = \left\langle {N,{\phi _v}} \right\rangle  = \left\langle {N,{\phi _w}} \right\rangle  = 0.
\end{equation}
By using the Weingarten operator the below equalities can be written
\begin{equation} \label{9}
\begin{array}{l}
   S\left( {{\phi _u}} \right) = {D_{{\phi _u}}}N = \frac{{\partial N}}{{\partial u}} \\
   \\
   S\left( {{\phi _v}} \right) = {D_{{\phi _v}}}N = \frac{{\partial N}}{{\partial v}}\\
   \\
   S\left( {{\phi _w}} \right) = {D_{{\phi _w}}}N = \frac{{\partial N}}{{\partial w}}.
\end{array}
\end{equation}

\section{The matrix of the Weingarten map of hypersurface $M^3$ in $E^4$}
\addcontentsline{toc}{section}{ }

In this original section, a practical method for the matrix of the Weingarten map of hypersurface $M^3$ in $E^4$ is introduced.

Let $M^3$ be an oriented hypersurface with the parametric equation  $\phi \left( {u,v,w} \right)$.   Then $ \{{\phi _u},{\phi _v},{\phi _w}\} $ is linearly independent and we also can write
\begin{equation} \label{6}
	\begin{array}{l}
		S\left( {{\phi _u}} \right) = a_{11}\,{\phi _u} + a_{21}\,{\phi _v} + a_{31}\,{\phi _w}\\
		S\left( {{\phi _v}} \right) = a_{12}\,{\phi _u} + a_{22}\,{\phi _v} + a_{32}\,{\phi _w}\\
		S\left( {{\phi _w}} \right) = a_{13}\,{\phi _u} + a_{23}\,{\phi _v} + a_{33}\,{\phi _w}
	\end{array}
\end{equation}
and the Weingarten matrix is given by
\[S = \left( {\begin{array}{*{20}{c}}
	a_{11}&a_{12}&a_{13}\\
	a_{21}&a_{22}& a_{23}\\
	a_{31}& a_{32}&a_{33}
	\end{array}} \right),\]
where $a_{ij}\in \mathbb{R}, 1 \le i,j \le 3.$
Using the equation \eqref{6}, we have the following systems of linear equations:

\begin{equation} \label{7}
	\begin{array}{l}
		\left\{ \begin{array}{l}
			\left\langle {S\left( {{\phi _u}} \right),{\phi _u}} \right\rangle  = a_{11} \phi _{11} +a_{21} \phi _{12} + a_{31}\phi _{13}\\
			\left\langle {S\left( {{\phi _u}} \right),{\phi _v}} \right\rangle  = a_{11} \phi _{12} + a_{21} \phi _{22} + a_{31} \phi _{23}\\
			\left\langle {S\left( {{\phi _u}} \right),{\phi _w}} \right\rangle  = a_{11} \phi _{13} + a_{21} \phi _{23} + a_{31} \phi _{33},
		\end{array} \right.\\
		\\
		\left\{ \begin{array}{l}
			\left\langle {S\left( {{\phi _v}} \right),{\phi _u}} \right\rangle  = a_{12} \phi _{11} +a_{22} \phi _{12} + a_{32} \phi _{13}\\
			\left\langle {S\left( {{\phi _v}} \right),{\phi _v}} \right\rangle  =a_{12} \phi _{12} + a_{22} \phi _{22} + a_{32}  \phi _{23}\\
			\left\langle {S\left( {{\phi _v}} \right),{\phi _w}} \right\rangle  = a_{12} \phi _{13} + a_{22} \phi _{23} +a_{32}  \phi _{33},
		\end{array} \right.\\
		\\
		\left\{ \begin{array}{l}
			\left\langle {S\left( {{\phi _w}} \right),{\phi _u}} \right\rangle  = a_{13}  \phi _{11} + a_{23} \phi _{12} + a_{33}  \phi _{13}\\
			\left\langle {S\left( {{\phi _w}} \right),{\phi _v}} \right\rangle  = a_{13} \phi _{12} + a_{23}  \phi _{22} + a_{33} \phi _{23}\\
			\left\langle {S\left( {{\phi _w}} \right),{\phi _w}} \right\rangle  = a_{13} \phi _{13} +a_{23}  \phi _{23} + a_{33} \phi _{33},
		\end{array} \right.
	\end{array}
\end{equation}
where
\begin{equation} \label{8}
	\begin{array}{l}
		\left\langle {{\phi _u},{\phi _u}} \right\rangle  = \phi _{11},\,\,\left\langle {{\phi _u},{\phi _v}} \right\rangle  = \phi _{12},\,\,\left\langle {{\phi _u},{\phi _w}} \right\rangle  = \phi _{13},\\
		\left\langle {{\phi _v},{\phi _v}} \right\rangle  = \phi _{22},\,\, \left\langle {{\phi _v},{\phi _w}} \right\rangle  = \phi _{23},\,\,\left\langle {{\phi _w},{\phi _w}} \right\rangle  = \phi _{33}.
	\end{array}
\end{equation}
Since the system $ \{{\phi _u},{\phi _v},{\phi _w}\} $ is linearly independent, using the equations \eqref{2} and \eqref{8}, we have
\[{\left\| {{\phi _u} \otimes {\phi _v} \otimes {\phi _w}} \right\|^2} = \left| {\begin{array}{*{20}{c}}
	\phi _{11}&\phi _{12}&\phi _{13}\\
	\phi _{12}&\phi _{22}&\phi _{23}\\
	\phi _{13}&\phi _{23}&\phi _{33}
	\end{array}} \right| \ne 0.\]
Also, 3-linear equation systems given by the equation \ref{7} have the determinant
\[\left| {\begin{array}{*{20}{c}}
	\phi _{11}&\phi _{12}&\phi _{13}\\
	\phi _{12}&\phi _{22}&\phi _{23}\\
	\phi _{13}&\phi _{23}&\phi _{33}
	\end{array}} \right| = \Delta .\]
Because of the property  ${\left\| {{\phi _u} \otimes {\phi _v} \otimes {\phi _w}} \right\|^2} = \Delta  \ne 0$, these 3-linear equations systems can be solved by Cramer method. Then using the equations \eqref{9}, \eqref{7} and \eqref{8} the matrix $S$ of the Weingarten map in $M^3$ can be found. Although $S$ is a symmetric linear operator, the matrix presentation  $(a_{ij})$ of $S$ with respect to $\{{\phi _u},{\phi _v},{\phi _w}\}$ is not necessary to be symmetric because the system $\{ {\phi _u},{\phi _v}, {\phi _w} \}$ is not orthonormal.
\subsection{Special Case}
If we take the orthogonal frame field $ \{{\phi _u},{\phi _v},{\phi _w}\} $ of the hypersurface $M^3,$ then we have $\phi _{12}=\phi _{13}=\phi _{23}=0$ from the equation \eqref{8}. Then, the system
$\left\{ {U = \frac{{{\phi _u}}}{{\left\| {{\phi _u}} \right\|}},\,\,\,V = \frac{{{\phi _v}}}{{\left\| {{\phi _v}} \right\|}},\,\,W = \frac{{{\phi _w}}}{{\left\| {{\phi _w}} \right\|}}} \right\}$
is an orthonormal frame field. Furthermore, we can write the following equations
\begin{equation}\label{10}
	\begin{array}{l}
		S\left( U \right) = {c_1}\,U + {c_2}\,V + {c_3}\,W\\
		S\left( V \right) = {c_2}\,U + {c_4}\,V + {c_5}\,W\\
		S\left( W \right) = {c_3}U + {c_5}\,V + {c_6}\,W,
	\end{array}
\end{equation}
then, the matrix of the Weingarten map can be calculated as follows:
\[S = \left( {\begin{array}{*{20}{c}}
	{{c_1}}&{{c_2}}&{{c_3}}\\
	{{c_2}}&{{c_4}}&{{c_5}}\\
	{{c_3}}&{{c_5}}&{{c_6}}
	\end{array}} \right).\]
By using the equations \eqref{4}, \eqref{9} and \eqref{10}, the coefficients
$c_{i}\in \mathbb{R},$ $ 1\leq i\leq 6 $ can be calculated as follows:
\begin{equation} \label{11}
	\begin{array}{l}
		{c_1} = \left\langle {S\left( U \right),U} \right\rangle  = \frac{1}{{{{\left\| {{\phi _u}} \right\|}^2}}}\left\langle {\frac{{\partial N}}{{\partial u}},{\phi _u}} \right\rangle ,\\\\
		{c_2} = \left\langle {S\left( U \right),V} \right\rangle  = \frac{1}{{\left\| {{\phi _u}} \right\|}}\frac{1}{{\left\| {{\phi _v}} \right\|}}\left\langle {\frac{{\partial N}}{{\partial u}},{\phi _v}} \right\rangle ,\\\\
		{c_3} = \left\langle {S\left( U \right),W} \right\rangle  = \frac{1}{{\left\| {{\phi _u}} \right\|}}\frac{1}{{\left\| {{\phi _w}} \right\|}}\left\langle {\frac{{\partial N}}{{\partial u}},{\phi _w}} \right\rangle ,\\\\
		{c_4} = \left\langle {S\left( V \right),V} \right\rangle  = \frac{1}{{{{\left\| {{\phi _v}} \right\|}^2}}}\left\langle {\frac{{\partial N}}{{\partial v}},{\phi _v}} \right\rangle ,\\\\
		{c_5} = \left\langle {S\left( V \right),W} \right\rangle  = \frac{1}{{\left\| {{\phi _v}} \right\|}}\frac{1}{{\left\| {{\phi _w}} \right\|}}\left\langle {\frac{{\partial N}}{{\partial v}},{\phi _w}} \right\rangle ,\\\\
		{c_6} = \left\langle {S\left( W \right),W} \right\rangle  = \frac{1}{{{{\left\| {{\phi _w}} \right\|}^2}}}\left\langle {\frac{{\partial N}}{{\partial w}},{\phi _w}} \right\rangle .
	\end{array}
\end{equation}
By using the equation \eqref{5}, we can also write six equations as below:
\begin{equation} \label{12}
	\begin{array}{l}
		\left\langle {\frac{{\partial N}}{{\partial u}},{\phi _u}} \right\rangle  + \left\langle {N,{\phi _{uu}}} \right\rangle  = 0,\\\\
		\left\langle {\frac{{\partial N}}{{\partial u}},{\phi _v}} \right\rangle  + \left\langle {N,{\phi _{uv}}} \right\rangle  = 0,\\\\
		\left\langle {\frac{{\partial N}}{{\partial u}},{\phi _w}} \right\rangle  + \left\langle {N,{\phi _{uw}}} \right\rangle  = 0,\\\\
		\left\langle {\frac{{\partial N}}{{\partial v}},{\phi _v}} \right\rangle  + \left\langle {N,{\phi _{vv}}} \right\rangle  = 0,\\\\
		\left\langle {\frac{{\partial N}}{{\partial v}},{\phi _w}} \right\rangle  + \left\langle {N,{\phi _{vw}}} \right\rangle  = 0,\\\\
		\left\langle {\frac{{\partial N}}{{\partial w}},{\phi _w}} \right\rangle  + \left\langle {N,{\phi _{ww}}} \right\rangle  = 0.
	\end{array}
\end{equation}
Also, by using the equations \eqref{2} and \eqref{8}, we find
\begin{equation} \label{13}
{\left\| {{\phi _u} \otimes {\phi _v} \otimes {\phi _w}} \right\|^2} = \left| {\begin{array}{*{20}{c}}
	\phi _{22}&0&0\\
	0&\phi _{11}&0\\
	0&0&\phi _{33}
	\end{array}} \right| = {\left\| {{\phi _u}} \right\|^2}\,{\left\| {{\phi _v}} \right\|^2}\,{\left\| {{\phi _w}} \right\|^2}.
\end{equation}
Hence we find the coefficients $c_{1},c_{2},c_{3},c_{4},c_{5},c_{6}$ of the Weingarten matrix in the equation \eqref{10} as follows:
\begin{equation} \label{14}
	\begin{array}{l}
		{c_1} =- \frac{1}{{{{\left\| {{\phi _u}} \right\|}^3}}}\frac{1}{{\left\| {{\phi _v}} \right\|}}\frac{1}{{\left\| {{\phi _w}} \right\|}}\det \left( {{\phi _{uu}},{\phi _u},{\phi _v},{\phi _w}} \right),\\\\
		{c_2} =- \frac{1}{{{{\left\| {{\phi _u}} \right\|}^2}}}\frac{1}{{{{\left\| {{\phi _v}} \right\|}^2}}}\frac{1}{{\left\| {{\phi _w}} \right\|}}\det \left( {{\phi _{uv}},{\phi _u},{\phi _v},{\phi _w}} \right),\\\\
		{c_3} =- \frac{1}{{{{\left\| {{\phi _u}} \right\|}^2}}}\frac{1}{{\left\| {{\phi _v}} \right\|}}\frac{1}{{{{\left\| {{\phi _w}} \right\|}^2}}}\det \left( {{\phi _{uw}},{\phi _u},{\phi _v},{\phi _w}} \right),\\\\
		{c_4} =- \frac{1}{{\left\| {{\phi _u}} \right\|}}\frac{1}{{{{\left\| {{\phi _v}} \right\|}^3}}}\frac{1}{{\left\| {{\phi _w}} \right\|}}\det \left( {{\phi _{vv}},{\phi _u},{\phi _v},{\phi _w}} \right),\\\\
		{c_5} =- \frac{1}{{\left\| {{\phi _u}} \right\|}}\frac{1}{{{{\left\| {{\phi _v}} \right\|}^2}}}\frac{1}{{{{\left\| {{\phi _w}} \right\|}^2}}}\det \left( {{\phi _{vw}},{\phi _u},{\phi _v},{\phi _w}} \right),\\\\
		{c_6} =- \frac{1}{{\left\| {{\phi _u}} \right\|}}\frac{1}{{\left\| {{\phi _v}} \right\|}}\frac{1}{{{{\left\| {{\phi _w}} \right\|}^3}}}\det \left( {{\phi _{ww}},{\phi _u},{\phi _v},{\phi _w}} \right).
	\end{array}
\end{equation}
So, by taking into account the equations \eqref{4}, \eqref{13} and \eqref{14} we have the symmetric Weingarten matrix
\begin{equation} \label{15}
	S = \left( {\begin{array}{*{20}{c}}
			{\frac{\varphi_{11}} {\phi _{11}}}&{\frac{\varphi_{12}} {{\sqrt {\phi _{11} \phi _{22}} }}}&{\frac{\varphi_{13}}{{\sqrt {\phi _{11} \phi _{33}} }}}\\\\
			{\frac{\varphi_{12}}{{\sqrt {\phi _{11} \phi _{22}} }}}&{\frac{\varphi_{22}}{\phi _{22}}}&{\frac{\varphi_{23}}{{\sqrt \phi _{22} \phi _{33}}}}\\\\
			{\frac{\varphi_{13}}{{\sqrt { \phi _{11} \phi _{33}} }}}&{\frac{\varphi_{23}}{{\sqrt \phi _{22} \phi _{33}}}}&{\frac{\varphi_{33}}{\phi _{33}}}
	\end{array}} \right).
\end{equation}
where
\begin{equation*}
\begin{array}{l}
\varphi_{11} =- \left\langle {{\phi _{uu}},N} \right\rangle ,\,\,\varphi_{12} =- \left\langle {{\phi _{uv}},N} \right\rangle ,\,\,\varphi_{13} =- \left\langle {{\phi _{uw}},N} \right\rangle,\\
\varphi_{22} = -\left\langle {{\phi _{vv}},N} \right\rangle ,\,\,\varphi_{23} =- \left\langle {{\phi _{vw}},N} \right\rangle ,\,\,\varphi_{33} = -\left\langle {{\phi _{ww}},N} \right\rangle .
\end{array}
\end{equation*}

Finally the following theorem can be given for hypersurface $M^3$ in $E^4$:

\begin{theorem}
	Let $M^3$ be an oriented hypersurface in $E^4.$ Then the Gaussian curvature and the mean curvature of $M^3$ can be given by:
	\begin{equation*}
	    K = \frac{{\varphi_{11} \varphi_{22} \varphi_{33} + 2 \varphi_{12} \varphi_{13} \varphi_{23} - {\varphi_{12}^2} \varphi_{33} - {\varphi_{13}^2} \varphi_{22} - {\varphi_{23}^2} \varphi_{11}}}{{\phi _{11} \phi _{22} \phi _{33}}}
	\end{equation*}
	and
	\begin{equation*}
	    H = \frac{1}{3}\left( {\frac{\varphi_{11}} {\phi _{11}} + \frac{\varphi_{22}} {\phi _{22}} + \frac{\varphi_{33}}{\phi _{33}}} \right),
	\end{equation*}
respectively.
\end{theorem}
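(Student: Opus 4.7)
The plan is to verify the formulas by a direct computation of the trace and determinant of the explicit symmetric matrix of the shape operator obtained in equation \eqref{15}. Since $K=\det S$ and $H=\tfrac{1}{3}\operatorname{Tr} S$ are invariants of the linear operator $S$ (independent of the basis chosen to represent it), we may evaluate them on the matrix of $S$ with respect to the orthonormal frame $\{U,V,W\}$, which is precisely the symmetric matrix \eqref{15}.

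First, for the mean curvature: the trace of the matrix \eqref{15} is simply the sum of its diagonal entries $\varphi_{11}/\phi_{11}+\varphi_{22}/\phi_{22}+\varphi_{33}/\phi_{33}$, and dividing by $3$ gives the stated expression for $H$ at once. This step is immediate.

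Second, for the Gaussian curvature: expand the $3\times 3$ determinant of \eqref{15} along the first row. The key simplification is the identity
\[
\sqrt{\phi_{11}\phi_{22}}\,\sqrt{\phi_{22}\phi_{33}}\,\sqrt{\phi_{11}\phi_{33}} \;=\; \phi_{11}\phi_{22}\phi_{33},
\]
so every cofactor contribution has the common denominator $\phi_{11}\phi_{22}\phi_{33}$. Collecting the six signed terms of the cofactor expansion yields
\[
\varphi_{11}\varphi_{22}\varphi_{33}-\varphi_{11}\varphi_{23}^{2}-\varphi_{12}^{2}\varphi_{33}+\varphi_{12}\varphi_{13}\varphi_{23}+\varphi_{12}\varphi_{13}\varphi_{23}-\varphi_{13}^{2}\varphi_{22}
\]
in the numerator, where the two equal mixed terms combine into $2\varphi_{12}\varphi_{13}\varphi_{23}$, giving exactly the claimed formula for $K$.

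The only potential obstacle is clerical: one must carefully track the cofactor signs and make sure that the three off-diagonal products all reduce to the same denominator $\phi_{11}\phi_{22}\phi_{33}$ so that the numerator can be read off as a polynomial in the $\varphi_{ij}$. Since the matrix is symmetric and only six distinct entries appear, this bookkeeping is routine, and no deeper identity beyond the square-root simplification above is needed.
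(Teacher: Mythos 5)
Your proposal is correct and follows essentially the same route as the paper: the paper's proof simply says to apply the definitions $K=\det S$ and $H=\tfrac{1}{3}\operatorname{Tr}S$ to the matrix in equation \eqref{15}, which is exactly the trace and cofactor-expansion computation you carry out (and your determinant bookkeeping checks out). You merely make explicit the square-root cancellation and the basis-invariance remark that the paper leaves implicit.
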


\begin{proof}
	By using the equation \eqref{15} and the definitions of the Gaussian curvature $K$ and the mean curvature $H$, the theorem can be easily proved.
\end{proof}

\begin{example}
	Let $M^3$ be an oriented hypersurface with the implicit equation $xy=1$ in $E^4$.  The parametric equation of  $M^3$ can be given by
	\begin{equation*}
	\phi \left( {u,v,w} \right) = \left( {u,\frac{1}{u},v,w} \right).
	\end{equation*}
Then, we obtain ${\phi _u} \otimes {\phi _v} \otimes {\phi _w} = \left( { - \frac{1}{{{u^2}}}, - 1,0,0} \right)$ and the unit normal field \\ $ N = \frac{1}{{\sqrt {1 + {u^4}} }}\left( { - 1, - {u^2},0,0} \right)$. By using the orthonormal basis $ \left\{ {\frac{{{\phi _u}}}{{\left\| {{\phi _u}} \right\|}},\,\,\,\frac{{{\phi _v}}}{{\left\| {{\phi _v}} \right\|}},\,\,\frac{{{\phi _w}}}{{\left\| {{\phi _w}} \right\|}}} \right\}  ,$ \\ we have
\[\begin{array}{l}
S\left( {\frac{{{\phi _u}}}{{\left\| {{\phi _u}} \right\|}}} \right) = \frac{{2{u^3}}}{{{{\left( {1 + {u^4}} \right)}^{3/2}}}}\frac{{{\phi _u}}}{{\left\| {{\phi _u}} \right\|}},\\
S\left( {\frac{{{\phi _v}}}{{\left\| {{\phi _v}} \right\|}}} \right) = 0,\\
S\left( {\frac{{{\phi _w}}}{{\left\| {{\phi _w}} \right\|}}} \right) = 0.
\end{array}\]
So, we find the Weingarten matrix $S$ as:
\[S = \left( {\begin{array}{*{20}{c}}
	{\frac{{2{u^3}}}{{{{\left( {1 + {u^4}} \right)}^{3/2}}}}}&0&0\\
	0&0&0\\
	0&0&0
	\end{array}} \right).\]
\end{example}
\begin{example}
	Let $S^3$ be a hypersphere with the implicit equation ${x^2} + {y^2} + {z^2} + {t^2} = 1$ in $E^4$. The parametric equation of  $S^3$ can be given by
	\begin{equation*}
	\phi \left( {u,v,w} \right) = \left( {\sin u\cos v\sin w,\sin u\sin v\sin w,\cos u\sin w,\cos w} \right). \end{equation*}
Then, $ \{{\phi _u},{\phi _v},{\phi _w}\} $ is an orthogonal system. Also we have the orthonormal basis $ \{U, V,  W\} $ of  $S^3$ such that
\[\begin{array}{l}
U = \frac{{{\phi _u}}}{{\left\| {{\phi _u}} \right\|}} = \left( {\cos u\cos v,\cos u\sin v, - \sin u,0} \right),\,\,\,\\\\
V = \frac{{{\phi _v}}}{{\left\| {{\phi _v}} \right\|}} = \left( { - sinv,\cos v,0,0} \right),\,\\\\
W = \frac{{{\phi _w}}}{{\left\| {{\phi _w}} \right\|}}= \left( {\sin u\cos v\cos w,\sin u\sin v\cos w,\cos u\cos w, - \sin w} \right).
\end{array}\]
Furthermore, the unit normal vector field $N$ can be found:
\begin{equation*}
 N = \left( { - \sin u\cos v\sin w, - \sin u\sin v\sin w, - \cos u\sin w,-\cos w} \right).
\end{equation*}
Then using the equation (15), we obtain $ S = {I_3}. $
\end{example}

\begin{theorem}
	Let $M^3$ be an oriented hypersurface in $E^4$  and let $ \left\{ {{X_P},{Y_P},{Z_P}} \right\} $ be a linearly independent vector system of the tangent space ${T_{{M^3}}}\left( P \right)$. Then, we have
\begin{equation*} \small{
	\begin{array}{l}
	i.\,\,S\left( {{X_P}} \right) \otimes S\left( {{Y_P}} \right) \otimes S\left( {{Z_P}} \right) = K\left( P \right)\left( {{X_P} \otimes {Y_P} \otimes {Z_P}} \right)\\
	ii.\,\,\left( {S\left( {{X_P}} \right) \otimes {Y_P} \otimes {Z_P}} \right) + \left( {{X_P} \otimes S\left( {{Y_P}} \right) \otimes {Z_P}} \right) + \left( {{X_P} \otimes {Y_P} \otimes S\left( {{Z_P}} \right)} \right)= \\
	\,\,\,\,\,\, \,\,\,\,\,\, \,\,\,\,\,\, \,\,\,\,\,\,\,\,\,\,\,\, \,\,\,\,\,\,  3 H\left( P \right)\left( {{X_P} \otimes {Y_P} \otimes {Z_P}} \right),
	\end{array}}
\end{equation*}
	where $K$ and $H$ are the Gaussian curvature and the mean curvature of $M^3$, respectively.
\end{theorem}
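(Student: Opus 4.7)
The plan is to reduce both identities to scalar computations by pairing each side with the unit normal $N$ via property iii, $\langle x\otimes y\otimes z,t\rangle = \det(x,y,z,t)$. The central observation is that since $S$ maps the tangent space $T_{M^3}(P)$ into itself, all arguments appearing in the ternary products on both sides of (i) and (ii) are tangent vectors at $P$. Hence every such ternary product is orthogonal to $T_{M^3}(P)$ and therefore parallel to $N(P)$. Consequently, to prove each identity it suffices to show that the $N$-components of the two sides agree.

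Since $\{X_P,Y_P,Z_P\}$ is a basis of $T_{M^3}(P)$, the shape operator $S$ has a $3\times 3$ matrix representation $M=(m_{ij})$ in this basis, and by basis invariance $\det M = \det S = K(P)$ and $\operatorname{tr} M = \operatorname{tr} S = 3H(P)$. For part (i), I would pair both sides with $N$ and use property iii to get $\det(S(X_P),S(Y_P),S(Z_P),N)$ on the left and $K(P)\det(X_P,Y_P,Z_P,N)$ on the right. Writing the $4\times 4$ matrix whose columns are $S(X_P),S(Y_P),S(Z_P),N$ as a product of the matrix with columns $X_P,Y_P,Z_P,N$ and the block matrix $\begin{pmatrix}M & 0\\ 0 & 1\end{pmatrix}$, the multiplicativity of the determinant gives the factor $\det M = K(P)$, which finishes this part.

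For part (ii), I would again pair the left-hand side with $N$ and use property iii to obtain the sum
\[
\det(S(X_P),Y_P,Z_P,N)+\det(X_P,S(Y_P),Z_P,N)+\det(X_P,Y_P,S(Z_P),N).
\]
Expanding $S(X_P)=m_{11}X_P+m_{21}Y_P+m_{31}Z_P$ and using multilinearity and alternation of $\det$, only the diagonal coefficient $m_{11}$ survives in the first determinant; analogous simplifications in the other two produce $m_{22}$ and $m_{33}$. The sum collapses to $(m_{11}+m_{22}+m_{33})\det(X_P,Y_P,Z_P,N) = 3H(P)\det(X_P,Y_P,Z_P,N)$, which matches the $N$-component of the right-hand side.

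The main obstacle, though a mild one, is justifying that it really suffices to compare $N$-components; once the observation is made that all relevant ternary products are proportional to $N$ (because their three arguments are tangent vectors), the rest is a clean application of multilinearity and basis-independence of trace and determinant. No computation in local coordinates or appeal to \eqref{15} is needed, which makes the argument conceptually cleanest and also independent of whether one works with the orthonormal frame of the Special Case or with a general linearly independent tangent frame $\{X_P,Y_P,Z_P\}$.
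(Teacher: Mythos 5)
Your proof is correct, and it is essentially a complete version of what the paper only gestures at: the paper's ``proof'' is a one-line assertion that the result follows from the listed properties of the ternary product together with the definitions of $K$ and $H$, with no reduction or computation shown. Your route makes this precise in a clean way: you use property iii, $\left\langle x\otimes y\otimes z,t\right\rangle=\det(x,y,z,t)$, rather than the norm identity the paper cites, and the two key ingredients you supply --- (a) since $S$ preserves $T_{M^3}(P)$, every ternary product occurring on either side has three tangent arguments and is therefore a scalar multiple of $N$, so equality of the $\left\langle\cdot,N\right\rangle$ components implies equality of the vectors; and (b) the block-matrix factorization giving $\det(S(X_P),S(Y_P),S(Z_P),N)=\det M\cdot\det(X_P,Y_P,Z_P,N)$ for part (i), and the multilinearity/alternation argument isolating the diagonal entries $m_{11}+m_{22}+m_{33}=\operatorname{Tr}S=3H(P)$ for part (ii) --- are exactly the steps the paper leaves implicit. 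Your observation that basis-independence of $\det$ and $\operatorname{Tr}$ lets you work in the arbitrary linearly independent frame $\{X_P,Y_P,Z_P\}$, with no appeal to the orthonormal special case or to the explicit Weingarten matrix of equation \eqref{15}, is a genuine advantage in generality and rigor over the paper's sketch; the only point worth stating explicitly when writing it up is the standard fact that $S(X)=D_XN$ is tangent (differentiate $\left\langle N,N\right\rangle=1$), which underpins step (a).
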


\begin{proof}
	By using (i), (ii) parts of the equation \eqref{2} and considering the definitions of the Gaussian curvature $K$ and the mean curvature $H$ the theorem can be easily proved.
\end{proof}
In \cite{nam4}, it is proved that these equations are also provided for closed hypersurfaces.

\begin{theorem}
	Let $M^3$ be an oriented hypersurface in $E^4$  and let $I^q$, $ K $, $ H $ be the $q$-th fundamental forms, the Gaussian curvature and the mean curvature,  respectively. Then we have
	\begin{equation} \label{16}
		{I^4} - 3H\,{I^3} + \frac{{3K}}{h}\,{I^2} - K\,I = 0
	\end{equation}
	where $h$ is the harmonic mean of the non-zero principal curvatures of $M^3.$
\end{theorem}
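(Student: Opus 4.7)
The plan is to apply the Cayley--Hamilton theorem to the shape operator $S$, which is a symmetric linear endomorphism of the three-dimensional tangent space $T_{M^3}(P)$. Writing the principal curvatures (eigenvalues of $S$) as $k_1,k_2,k_3$, the characteristic polynomial is $p(t) = t^3 - \sigma_1 t^2 + \sigma_2 t - \sigma_3$ with $\sigma_1 = k_1+k_2+k_3$, $\sigma_2 = k_1k_2+k_1k_3+k_2k_3$, $\sigma_3 = k_1k_2k_3$, and Cayley--Hamilton immediately gives the operator identity $S^3 - \sigma_1 S^2 + \sigma_2 S - \sigma_3\,\mathrm{Id} = 0$.

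The second step is to translate the three elementary symmetric functions into the invariants that appear in \eqref{16}. From the definitions stated just before Section~2 one has $\sigma_3 = \det S = K$ and $\sigma_1 = \operatorname{tr} S = 3H$. For the middle coefficient, assuming the principal curvatures are non-zero the harmonic mean is
\[
h \;=\; \frac{3}{k_1^{-1}+k_2^{-1}+k_3^{-1}} \;=\; \frac{3\,k_1k_2k_3}{k_1k_2+k_1k_3+k_2k_3} \;=\; \frac{3K}{\sigma_2},
\]
which rearranges to $\sigma_2 = 3K/h$. Substituting back into Cayley--Hamilton produces the operator relation
\[
S^3 - 3H\,S^2 + \frac{3K}{h}\,S - K\,\mathrm{Id} \;=\; 0.
\]

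To conclude, I would pair this operator identity against an arbitrary $X,Y \in \chi(M^3)$: apply both sides to $X$ and take the inner product with $Y$. Since $I^q(X,Y) = \langle S^{q-1}(X),Y\rangle$, each power of $S$ converts cleanly into the corresponding fundamental form, and bilinearity of $\langle\cdot,\cdot\rangle$ yields \eqref{16} pointwise on $M^3$.

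The only genuinely subtle point, and thus the main obstacle, is the interpretation of the coefficient $3K/h$ when one or more principal curvatures vanishes: then $K=0$ so $3K/h$ would be read as $0$, while $\sigma_2$ need not vanish. The cleanest way to handle this is to regard $3K/h$ as a shorthand for $\sigma_2$, justified by the displayed identity $\sigma_2 = 3K/h$ wherever $h$ is defined in the usual sense, and to understand the theorem as the Cayley--Hamilton relation pulled back to the fundamental forms. Beyond this bookkeeping the result is a direct consequence of the spectral theorem together with the definitions of $K$, $H$, and $I^q$.
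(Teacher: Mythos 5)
Your proof is correct and follows essentially the same route as the paper: apply Cayley--Hamilton to $S$, identify $\sigma_1=3H$, $\sigma_3=K$, $\sigma_2=3K/h$ via the harmonic mean, and pair the resulting operator identity with vector fields through $I^q(X,Y)=\left\langle S^{q-1}(X),Y\right\rangle$. Your closing remark about interpreting $3K/h$ as $\sigma_2$ when some principal curvature vanishes is a point the paper leaves implicit (it restricts $h$ to the non-zero principal curvatures), but it does not change the argument.
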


\begin{proof}
Let ${k_1},{k_2},{k_3}$ be the characteristic values of the Weingarten map $S$ (or the principal curvatures of $M^3$ ). Then  we obtain the characteristic polynomial ${P_S}\left( \lambda  \right)$  of the Weingarten map $S$ of $M^3$ as
\begin{equation*}
    \begin{array}{l}
{P_S}\left( \lambda  \right) = \det \left( {\lambda {I_3} - S} \right){\mkern 1mu} = {\lambda ^3} - \left( {{k_1} + {k_2} + {k_3}} \right){\lambda ^2}{\mkern 1mu}   + \left( {{k_1}{k_2} + {k_1}{k_3} + {k_2}{k_3}} \right)\lambda  - \left( {{k_1}{k_2}{k_3}} \right).
\end{array}
\end{equation*}

By using the Cayley-Hamilton theorem, we obtain
\[{S^3} - \left( {{k_1} + {k_2} + {k_3}} \right){S^2} + \left( {{k_1}{k_2} + {k_1}{k_3} + {k_2}{k_3}} \right)S - \left( {{k_1}{k_2}{k_3}} \right){I_3} = 0.\]
By using the definitions of the $q-th$ fundamental forms, the Gaussian curvature, the mean curvature and the harmonic mean
\begin{equation*}
h = \frac{3}{{\frac{1}{{{k_1}}} + \frac{1}{{{k_2}}} + \frac{1}{{{k_3}}}}}
\end{equation*}
of the principal curvature ${k_1},{k_2},{k_3}$,\\
we obtain the equation \eqref{16}.
\end{proof}


\section{Dupin indicatrix of the hypersurface in $E^4$}
\addcontentsline{toc}{section}{ } 

Let $X,Y,Z$ be three principal vectors according to the principal curvatures ${k_1},{k_2},{k_3}$ of $M^3$. If we consider the orthonormal basis $\{X,Y,Z\}$ of $M^3$ then for any tangent vector $W_{P}\in T_{M^3}(P),$ we can write
${W_P} = x\,{X_P} + y\,{Y_P} + z\,{Z_P},\,\, $ where $x,y,z \in \mathbb{R}, $
and
\[\begin{array}{l}
S\left( {{W_P}} \right) = x\,S\left( {{X_P}} \right) + y\,S\left( {{Y_P}} \right) + z\,S\left( {{Z_P}} \right)\\
\,\,\,\,\,\,\,\,\,\,\,\,\,\,\,\,\,\,\,\,\,\,\,\,\,\, = x\, {k_1}{X_P} + y\,{k_2}{Y_P} + z\,{k_3}{Z_P}
\end{array}\]
Here, the Dupin indicatrix $\mathbb{D}$ of $M^3$ can be defined by

\[\mathbb{D}=\left\{ \begin{array}{l}
{W_P} = \left( {x,y,z} \right) \in {T_{{M^3}}}(P)|
\left\langle {S\left( {{W_P}} \right),{W_P}} \right\rangle  = {k_1}{x^2} + {k_2}{y^2} + {k_3}{z^2} =  \pm 1
\end{array} \right\}.\]
In another words, the Dupin indicatrix corresponds to a\\ hypercylinder which has the equation
\begin{equation*}
    {k_1}{x^2} + {k_2}{y^2} + {k_3}{z^2} =  \pm 1.
\end{equation*}
Now, we will examine the Dupin indicatrix according to the Gaussian curvature $K:$\\
1)\,\, Let $K\left( P \right) > 0. $
\begin{itemize}
\item If ${k_1},\,{k_2},\,{k_3} > 0$ then for equation of the Dupin indicatrix, we can write
${k_1}{x^2} + {k_2}{y^2} + {k_3}{z^2} =  \pm 1.$
Hence, the Dupin indicatrix is the ellipsoidal class and this equation is called ellipsoidal cylinder in $E^4.$ In this condition,\,\,\, $P\in M^3$ is called an ellipsoidal point.
\item If ${k_1}> 0,\,\, {k_2},\,{k_3} < 0$ or ${k_2}> 0,\,\, {k_1},\,{k_3} < 0$ or ${k_3}> 0$ ${k_1},\,{k_2} < 0  $ then for equation of the Dupin indicatrix, we can write
${k_1}{x^2} - {k_2}{y^2} - {k_3}{z^2} =  \pm 1.$ Hence, the Dupin indicatrix is the hyperboloidical class and this equation is called hyperboloidical cylinder one or two sheets in $E^4$. In this condition, $P\in M^3$ is called a hyperboloidical point.
\end{itemize}
2)\,\, Let $K\left( P \right) < 0.$
\begin{itemize}
\item If only one of ${k_i}$'s,\,  ${i=1,2,3}$ is negative,  then for the equation of the Dupin indicatrix, we can write\\
\[\left\{ \begin{array}{l}
{k_1}{x^2} + {k_2}{y^2} - {k_3}{z^2} =  \pm 1, \\
{k_1}{x^2} - {k_2}{y^2} + {k_3}{z^2} =  \pm 1, \\
- {k_1}{x^2} + {k_2}{y^2} + {k_3}{z^2} =  \pm 1.
\end{array} \right.\]
The above equations are called one or two sheeted hyperboloidical cylinder in $E^4.$ Then $ P\in M^3$ is called a hyperboloidical point.

\item If ${k_1},{k_2},{k_3}< 0$ then the Dupin indicatrix is the ellipsoidal class and this equation is called ellipsoidal cylinder in $E^4.$ So $P\in M^3$ is called a ellipsoidal point.
\end{itemize}
3) Let $K\left( P \right) = 0$.
\begin{itemize}
\item If ${k_1}=0$  or ${k_2}=0\,\ or \,\ {k_3}=0 $, then for the equation of the Dupin indicatrix for each case, we get
\begin{itemize}
	\item[i] If ${k_1} = 0,\,{k_2},\,{k_3}$ are the same or different signs then ${k_2}{y^2} + {k_3}{z^2} =  \pm 1$.
	\item[ii]  If ${k_2} = 0,\,{k_1}, {k_3}$ are the same or different signs then ${k_1}{x^2} + {k_3}{z^2} =  \pm 1$.
	\item[iii] If ${k_3} = 0,\,{k_1},\,{k_2}$ are the same or different signs then ${k_1}{x^2} + {k_2}{y^2} =  \pm 1.$
\end{itemize}
These equations are called elliptic cylinder or hyperbolic cylinder in $E^4.$ In this condition, $ P\in M^3$ is called an elliptic cylinder or hyperbolic cylinder point.

\item If ${k_1} = {k_2} = {k_3} = 0$ then the point $P \in M^3$ is a flat point.

\item  If any two of ${k_i}$'s,\,  ${i=1,2,3}$ are zero and other positive or negative then ${k_3}{z^2} =  \pm 1$ or ${k_2}{y^2} =  \pm 1$ or ${k_1}{x^2} =  \pm 1.$
	\end{itemize}



\end{document}